\numberwithin{equation}{section}
\theoremstyle{definition}  
\newtheorem{thm}{Theorem}[section]
\newtheorem{prop}[thm]{Proposition}
\newtheorem{conj}[thm]{Conjecture}
\newtheorem{lem}[thm]{Lemma}
\theoremstyle{remark}
\newcommand{\F}{\mathbb{F}}
\newcommand{\N}{\mathbb{N}}
\renewcommand{\P}{\mathbb{P}}
\newcommand{\Q}{\mathbb{Q}}
\newcommand{\Z}{\mathbb{Z}}
\newcommand{\cA}{\mathcal{A}}
\newcommand{\cC}{\mathcal{C}}
\newcommand{\cU}{\mathcal{U}}
\newcommand{\m}{\mathfrak{m}}
\newcommand{\Gal}{\mathrm{Gal}}
\newcommand{\Spec}{\mathrm{Spec}\,}
\newcommand{\tor}{\mathrm{tor}}
\newcommand{\Tr}{\mathrm{Tr}}
\newcommand{\GL}{\mathrm{GL}}
\newcommand{\Hom}{\mathrm{Hom}}
\renewcommand{\SS}{\mathsf{S}}
\title[Virtual freeness for abelian varieties]{Virtual freeness for abelian varieties over boundedly generated fields}
\author{Bo-Hae Im}
\address{Department of Mathematical Sciences, KAIST, 291 Daehak-ro, Yuseong-gu, Daejeon, 34141, South Korea}
\email{bhim@kaist.ac.kr}
\author{Michael Larsen}
\address{Department of Mathematics, Indiana University, Bloomington, IN, 47405, U.S.A.}
\email{mjlarsen@iu.edu}
\thanks{Bo-Hae Im was supported by the Basic Science Research Program through the National Research Foundation of Korea (NRF) grant funded by the Korean government (MSIT) (NRF-2023R1A2C1002385, or RS-2023-NR076333). Michael Larsen was partially supported by NSF grant DMS-2401098 and the Simons Foundation.}
\begin{document}

\begin{abstract}
Let $\Omega$ be a \emph{boundedly generated} extension of $\Q$; that is, for some $d$ and some finitely generated extension $K$ of $\Q$, $\Omega$ is generated over $K$ by elements of degree $\le d$. If $A/\Omega$ is an abelian variety,
then $A(\Omega)$ is a virtually free abelian group.

As an application, we prove that if $\dim A = 1$ and $\sigma_1,\ldots,\sigma_n\in \Gal(\bar K/K)$, then the rank of $A$ over the invariant subfield $\bar K^{\langle \sigma_1,\ldots,\sigma_n\rangle}$ is infinite.
This settles the $1$-dimensional case of a conjecture of one of us \cite{Larsen} and the genus $1$ case of a conjecture of Junker and Koenigsmann \cite{JK}.

\end{abstract}

\maketitle

\section{Introduction}

We say a field  $\Omega$ is \emph{boundedly generated} over $\Q$ if there exists a subfield $K$ of $\Omega$ such that $K$ is finitely generated over $\Q$ and $\Omega$ is generated over $K$ by a 
(possibly infinite) set of elements, each algebraic of degree $\le d$ over $K$. We are interested in understanding the group of points of an abelian variety $A$ over $\Omega$. We may always assume that the subfield $K$ 
over which $\Omega$ is generated by elements of bounded degree is large enough that $A$ is defined over it. 
Throughout this paper, therefore, we will assume that $K$ is a field finitely generated over $\Q$ and $A$ is an abelian variety defined over $K$.

By N\'eron's extension of the Mordell-Weil theorem \cite[Chapter 6, Theorem 1]{Lang}, the group $A(K)$ is finitely generated. In particular, it contains a finitely generated free abelian subgroup; i.e., it 
is virtually free. Likewise $A(L)$ is finitely generated for all finite extensions $L/K$, but this certainly does not imply that $A(\Omega)$ is finitely generated (and, in general, it is not). 

Silverman proved \cite[Lemma]{Si} that if $K$ is a number field, $d$ is a positive integer, and $A$ is an elliptic curve over $K$ then there is an upper bound on the order of $A(L)_{\tor}$ as $L$ ranges over
extensions of $K$ of degree $\le d$. 
In \cite[Proposition 6]{IL-HJ}, this was extended to higher dimensional abelian varieties $A$ and to finitely generated fields.
These results suggest, but certainly do not imply, that $A(\Omega)_{\tor}$ may be bounded.

For every positive integer $d$, we define $K(d)$ to be the subfield of $\bar K$ generated by all $K$-extensions $L\subset \bar K$
which are of degree $\le d$ over $K$. We prove the following common extension of the theorems of Silverman and N\'eron.
\begin{thm}
\label{main}
The group $A(\Omega)$ is virtually free abelian.
\end{thm}

It suffices to prove this in the case that $\Omega$ is of the form $K(d)$. Indeed, we can always embed $\Omega$ as a subfield of $K(d)$ for some $K$ and $d$.
If $A(K(d))$ is virtually free abelian, then 
$A(\Omega)$ is a subgroup of a virtually 
abelian group. It therefore contains a finite index subgroup which is contained in a free abelian group, so it is itself virtually free abelian.

The Mordell-Weil groups that Silverman and N\'eron considered are finitely generated, but typically $A(K(d))$ is not. To prove it is virtually free, therefore, we need to show both that $A(K(d))_{\tor}$ is finite and that 
$A(K(d))/A(K(d))_{\tor}$ is free abelian. 

As an application of Theorem~\ref{main}, we prove the following result:

\begin{thm}
\label{std}
If $\dim A=1$ and $\sigma_1,\ldots,\sigma_n\in G_K$, then
\begin{equation}
\label{infinite}
\dim_{\Q} A(\bar K^{\langle \sigma_1,\ldots,\sigma_n\rangle})\otimes \Q = \infty.
\end{equation}
\end{thm}

In 1974,  Frey and  Jarden \cite{FJ} proved that for any abelian variety $A/K$,
if $\sigma_1,\ldots,\sigma_n$ are chosen independently from the uniform distribution on $G_K$, then \eqref{infinite} holds with probability $1$.
This has been significantly strengthened by Asayama and Taguchi, who proved \cite[Corollary 4.3]{AT} that 
if $n\ge 2$, with probability $1$, $A(\bar K^{\langle \sigma_1,\ldots,\sigma_n\rangle})$ is isomorphic to
the direct sum of a finite abelian group and a free abelian group of countably infinite rank.

In 2003, one of us \cite{Larsen} conjectured that \eqref{infinite}, in fact, holds in an absolute sense.  Writing $\tilde K$ for $\bar K^{\langle \sigma_1,\ldots,\sigma_n\rangle}$, the assertion is

\begin{conj}
\label{equiv}
If $A$ is any non-trivial abelian variety over any field $\tilde K$ of characteristic zero such that $G_{\tilde K}$ is finitely generated, then $A$ has infinite rank over $\tilde K$.
\end{conj}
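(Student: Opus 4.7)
The plan is to exploit the equivalence between Conjecture~\ref{equiv} and Conjecture~\ref{std} established immediately above. Given $A/K$ with $G_K$ topologically finitely generated, choose a finitely generated subfield $K_0 \subset K$ carrying a model $A_0/K_0$ of $A$, and restrict a finite set of topological generators $\tau_1, \ldots, \tau_n$ of $G_K$ to elements $\sigma_1, \ldots, \sigma_n$ of $G_{K_0}$. Then $A_0(\bar K_0^{\langle \sigma_1, \ldots, \sigma_n\rangle})$ embeds in $A(K)$, so the problem reduces to proving that $A_0$ has infinite rank over the fixed field of finitely many elements of the absolute Galois group of a finitely generated field of characteristic zero; an affirmative answer for \emph{every} non-trivial abelian variety $A_0/K_0$ and every choice of $\sigma_i$ then yields Conjecture~\ref{equiv}.

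To attack this reduced statement, I would execute the two-step Mordellic program advertised in the abstract. Let $K_0^{\tor}$ denote the extension of $K_0$ obtained by adjoining all torsion points of $A_0(\bar K_0)$. The first step is to show that $A_0(K_0^{\tor})$ is free modulo torsion. The second step would then deduce from this that for each integer $d$ the group $A_0(M_0^{(d)})$ is virtually free abelian, where $M_0^{(d)}$ is the compositum of all Galois extensions of $K_0$ of degree at most $d$; this should follow by controlling the cohomology of $\Gal(K_0^{\tor}/K_0)$ acting on $A_0(K_0^{\tor})$ in terms of Kummer classes of bounded order. Given both Mordellic results, I would construct infinitely many independent points in $A_0(\bar K_0^{\langle \sigma_i\rangle})$ by studying the action of $\Gal(M_0^{(d)}/K_0)$ on $A_0(M_0^{(d)})$: since the $\sigma_i$ act through finite-order automorphisms on this finite quotient, a trace-over-orbits construction applied to a free-abelian subgroup of infinite rank supplied by the second Mordellic result produces an infinite-rank $\langle \sigma_1, \ldots, \sigma_n\rangle$-invariant subgroup, provided $d$ is chosen large enough relative to the orbit sizes that control the averaging.

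The main obstacle is the first Mordellic step for higher-dimensional simple abelian varieties. For elliptic curves this freeness should be accessible via the openness of the Galois representation on the Tate module (Serre) combined with Kummer theory and the comparison of canonical heights with $\ell$-adic valuations of Kummer cocycles; indeed, this is presumably how the main theorem of the paper is established. For a simple abelian variety of dimension greater than one, however, one must work with a larger endomorphism ring and a more intricate Galois image, and a freeness statement for $A_0(K_0^{\tor})$ modulo torsion appears to require inputs at the depth of the Mumford--Tate conjecture for $A_0$. A full proof of Conjecture~\ref{equiv} would therefore seem to demand either a new approach to the torsion-field Mordell--Weil group in higher dimensions, or a reduction trick producing an elliptic quotient (or elliptic factor up to isogeny) of $A$ over a sufficiently large Galois extension of $K$, so that the elliptic curve case of the paper can be invoked directly.
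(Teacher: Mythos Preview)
First, a framing point: the statement is a \emph{conjecture}, and the paper does not prove it in full. The paper establishes only the elliptic-curve case (its Theorem~\ref{main}), so there is no ``paper's own proof'' of Conjecture~\ref{equiv} to compare against. Your proposal is really a sketch of how one might attack the conjecture, and it should be assessed as such.

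Your reduction in the first paragraph is correct and matches the paper's discussion. The two Mordellic theorems you describe are also correct and are proved in the paper---but, crucially, they are proved for abelian varieties of \emph{arbitrary} dimension (Theorems~\ref{Ktor} and~\ref{silverman}). The input is not Serre's open-image theorem but Bogomolov's theorem on homotheties (Theorem~\ref{uniform Bog}), which holds for all $A$. So your diagnosis of the obstacle is misplaced: the dimension restriction in the paper does not come from the Mordellic side at all.

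The genuine gap is your second paragraph. You write that a ``free-abelian subgroup of infinite rank [is] supplied by the second Mordellic result,'' but that result only says $A_0(K_0(d))$ is virtually free; it says nothing about its rank. And even if the rank were infinite, a trace-over-orbits argument cannot work as stated: $\Gal(M_0^{(d)}/K_0)$ is a profinite group, not a finite one, so the $\sigma_i$ do not act through finite-order automorphisms there, and there is no finite averaging operator to apply. Abstract traces can and do annihilate infinite-rank subgroups; one needs a mechanism that \emph{produces} points whose traces are provably independent.

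The paper supplies exactly such a mechanism, and this is where the elliptic-curve hypothesis enters. Over a finite extension $L_0$ where $A_0[2]$ is rational, $A_0$ has an affine model $v^2=f(u)$ with $f$ split quartic, and the Hales--Jewett theorem (\S4) yields a finite list of linear forms $\alpha_i t+\beta_i$ such that for almost all $t_0$ some $f(\alpha_i t_0+\beta_i)$ is a square in $L=KL_0$. Traces of the resulting points lie in $A_0(K)\cap A_0(L_0(2))$. The Mordellic theorem is then used \emph{contrapositively}: if the rank were finite, these traces would lie in a finitely generated group, and a Chebotarev/mod-$p$ counting argument (\S5, Proposition~\ref{m and p}) produces a specialization where a new trace point escapes the subgroup generated by any fixed finite list---a contradiction. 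The Hales--Jewett step has no obvious analogue in higher dimension, and that, not the Kummer theory, is the barrier to Conjecture~\ref{equiv}.
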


The case $\dim A = 1$ of this conjecture follows immediately from Theorem~\ref{std}. Indeed, given $A/\tilde K$,
choose a finitely generated subfield $K$ of $\tilde K$ over which $A$ can be defined. Fix an algebraic closure of
$\tilde K$, and let $\bar K$ denote the algebraic closure of $K$ in this closure. Thus we have a natural homomorphism
$G_{\tilde K}\to G_K$. If $\sigma_1,\ldots,\sigma_n$ denote the images in $G_K$ of a sequence of topological
generators of $G_{\tilde K}$, then $A(\tilde K)\otimes \Q \supset A(\bar K^{\langle \sigma_1,\ldots,\sigma_n\rangle})\otimes \Q$, which is of infinite dimension.

Over the last twenty years, this conjecture has been discussed in the literature in various forms, 
sometimes for all possible $\tilde K$ and sometimes only for subfields of $\bar\Q$,
often, but not always, assuming $A$ is an elliptic curve.
It is well known that in general $A(\bar K)$ has infinite rank (the result is attributed  in \cite{FJ} to a letter of Serre). The difficulty is to construct elements in $A(\bar K)$ in a way which controls
the action of the $\sigma_i$.
A survey of results on Conjecture~\ref{equiv} is given in \cite{IL-Survey}, but we just briefly mention a few approaches that have been fruitful.

\begin{enumerate}[leftmargin=10pt, labelwidth=0pt, labelsep=5pt, align=left]
\item[1.]\textbf{Arithmetic.} For elliptic curves over $\Q$, one can  take advantage of Heegner points. This has been done in various cases \cite{BreI, Ha, I2} and most recently, by  Im and  Choi
 \cite{CI},  for all $(\sigma_1,\ldots,\sigma_n)$ and elliptic curves over $\Q$ of analytic rank $\le 1$. In a somewhat different arithmetic direction,
Tim and Vladimir Dokchitser gave \cite{DD} a proof for all elliptic curves over number fields which are not totally imaginary, conditional on the Birch-Swinnerton-Dyer conjecture.

\item[2.]\textbf{Diophantine geometry.} The $K$-rational points on any quotient of $A^n$ by a finite group action can be lifted to points of  $A(\bar K)^n$ and therefore to points on $A(\bar K)$.
Hilbert irreducibility plays a role in proving that the points so constructed span an infinite-dimensional space.
The papers \cite{I1, ILR, Larsen} all use this method. A fairly general result in this direction \cite{IL-cyclic} is that in every dimension, Conjecture~\ref{equiv} holds for topologically cyclic fields $K$.

\item[3.]\textbf{Field arithmetic.}
A 2010 conjecture of Junker and Koenigsmann \cite{JK}
asserts that every characteristic zero field $K$ with finitely generated Galois group is \emph{ample},
meaning that every pointed non-singular curve over $K$ has infinitely many $K$-points (see \cite{BF} for a discussion of ample fields).
 Fehm and  Petersen prove \cite{FP} that if $K$ is {ample}, then every non-trivial abelian variety over $K$ has infinite rank.

\item[4.]\textbf{Additive combinatorics.} In \cite{IL-HJ}, we use Ramsey-theoretic ideas to prove Conjecture~\ref{equiv} whenever 
$A$ is an elliptic curve containing an affine open of the form 
\begin{equation}
\label{Jacobi}
v^2 = (u+c_0)(u+c_1)(u+c_2)(u+c_3),
\end{equation}
or more briefly, $v^2 = f(u)$, where $f$ is always understood in this paper to be a monic polynomial of degree $4$ which splits completely into linear factors.
This paper follows the method of \cite{IL-HJ} but eliminates this additional hypothesis, thereby proving the one-dimensional case of Conjecture \ref{equiv}.
\end{enumerate}

As an immediate corollary of Theorem~\ref{std}, we obtain the genus $1$ case of the Junker-Koenigsmann conjecture, since every pointed non-singular curve of genus $1$ over $K$
is a cofinite subset of an elliptic curve over $K$.

In \S2, we use results of Moon \cite{Moon} and Bays, Hart, and Pillay \cite{BHP} to prove Theorem~\ref{main}.
The papers we cite 
work in the setting of number fields, while we want a result which covers all finitely generated extensions of $\Q$,
so we make explicit exactly what arithmetic inputs they use and how we know that the same inputs are available in the generality 
we need.

The idea of our proof of Theorem~\ref{std} is as follows.  
Let $L$ be a finite Galois extension of $K$ such that
the $2$-torsion points of $A$ and at least one additional point are defined over $L$.
There exists an affine open subset of $A$ defined over $K$ which can be expressed
in the form \eqref{Jacobi} over $L$.
Let $\tilde K=\bar K^{\langle \sigma_1,\ldots,\sigma_n\rangle}$, and
let $\tilde L = \tilde K L$.
In \S3, we use a Ramsey-theoretic argument to prove that there exists a finite set 
$$\Sigma := \{\alpha_i t+ \beta_i\mid i=1,2,\ldots,h\}$$
of non-constant linear functions in $t$, with $\alpha_i,\beta_i\in L$, such that for all but finitely many
$t_0\in L$, there exists $u_0\in \Sigma(t_0)$ such that $\pm\sqrt{f(u_0)}\in \tilde L$. Choose a square root $v_0$, so $P := (u_0,v_0)\in A(\tilde L)$.
Applying trace, we obtain 
\begin{equation}
\label{Q-construction}
Q := \Tr_{\tilde L/\tilde K}P \in A(\tilde K).
\end{equation}
We would like to show that the set of points constructed in this way generates a group of infinite rank.
Each point $P=(u_0,v_0)$ is defined over some quadratic extension of $L$, so each point $Q$ in \eqref{Q-construction} is defined over
an extension of $K$ of bounded degree which is contained in $\tilde K$.

We remark that merely showing there exists some finite extension $\tilde L/\tilde K$ for which we can construct an infinite subset of $A(\tilde L)$  is very easy. Finding such an extension and such a subset for which one can prove that the traces generate an infinite rank group is much more difficult. What is good about the combinatorial construction mentioned above is
that the points it produces behave in some sense as if they were of positive density, and this offers hope of proving the claim of infinite rank.

To implement this idea, in \S4, we first use Chebotarev density to show that for any finite sequence $Q_1,Q_2,Q_3,\ldots, Q_N$ of points of the form 
\eqref{Q-construction},
there exists a specialization of $A$ to an elliptic curve $E/\F_p$ such that the subgroup of $E(\F_p)$ generated by the $Q_i$ in the sequence is of arbitrarily large index in $E(\F_p)$.
We then show that the proportion of elements of $E(\F_p)$ which can be represented by reducing points from \eqref{Q-construction}
is bounded away from $0$.
Together these two facts imply that the group generated by points from \eqref{Q-construction} cannot be generated by any finite set, which finishes the proof. This argument was suggested to us by the large sieve approach to proving quantitative Hilbert irreducibility. However,
there is a key difference in that in our
case we are trying to find a sufficiently good upper bound for ``density'' rather than proving that it is zero, and this can be achieved with a single large prime $p$; no actual sieving is needed.

It may seem surprising at first glance 
that a result asserting that $A(\tilde K)$ is large can be deduced from a theorem which says that $A(\Omega)$ is in some sense small.
We use Theorem~\ref{main} in order to show that the points $Q_i$  cannot all lie in a finite subspace.
The key is that, if they did, they would have to lie also in some virtually free abelian group $A(K(d))$ and would therefore have to lie in a finitely generated group,
contrary to the ``sieve''  argument.

It is worth noting that not every elliptic curve has an affine open of the form $v^2 = f(u)$ where $f$ is a monic quartic, as assumed in this introduction. In general, we must work
with twists of curves of this form, which adds some notational complexity but does not introduce real additional difficulty. 

The method of this paper ultimately depends on the existence of a non-trivial map from a hyperelliptic curve to $A$ (which in our case is an isomorphism). We may therefore hope that it may be applied to prove Conjecture~\ref{equiv} for all abelian surfaces. However, in dimension $\ge 3$, a very general abelian variety does not admit such a map \cite{Pirola}, so it seems a new idea will be needed to prove Conjecture~\ref{equiv} in general.

We would like to thank the referee for calling our attention to the references \cite{Moon} and \cite{BHP}, which led to
a substantial simplification of this paper.

\section{Silverman's Lemma and the Mordell-Weil-N\'eron Theorem}

This section is devoted to the proof of Theorem~\ref{main}.
 
We begin by assembling some known results about abelian varieties over fields finitely generated over $\Q$. Given
$A/K$ and a rational prime $\ell$, let $G_{\ell^\infty}$ denote the image of $G_K$
in the $\ell$-adic Galois representation $G_K\to \GL(T_\ell(A))$. Let $K_{\ell^\infty}$ denote the fixed field of $\bar K$
under the kernel of the Galois representation, i.e., the field extension of $K$ generated by the coordinates of all $\ell$-power torsion elements.
Let $K_{\tor}$ denote the compositum of all $K_{\ell^\infty}$, i.e., the field generated over $K$ by the coordinates of
all torsion points of $A$.

\begin{prop}
\label{facts}
With notations as above, we have the following facts:
\begin{enumerate}
\item The Mordell-Weil group $A(K)$ is finitely generated.
\item There exists a positive integer $d$ 
such that for all $\ell$, $G_{\ell^\infty}$ contains the homothety group $(\Z_\ell^\times)^d$.
\item There exists a finite extension $L/K$ such that the fields $K_{\ell^\infty}L$ are linearly disjoint over $L$.
\end{enumerate}
\end{prop}
 
 \begin{proof}
We have already mentioned that part (1) is N\'eron's generalization of the Mordell-Weil theorem.

Part (2)  was originally asserted by Serre \cite[2.2.5]{Serre84}. See \cite[\S2]{Serre86} for his proof in the number field case and,
e.g., \cite[Lemma 3.4]{JJ} for an explanation of how the general case follows from this case.

Part (3) is due to Serre \cite[Theorem 1]{Serre13} in the number field case.
In the general case it is due to Gajda and Petersen \cite[Corollary 1.2]{GP}.

 \end{proof}
 
 \begin{prop}
\label{K(d) torsion}
If $K$ is a finitely generated extension of $\Q$, $A/K$ is an abelian variety, and $d$ is a positive integer, then $A(K(d))_{\tor}$ is finite.
\end{prop}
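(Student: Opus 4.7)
The plan is to combine Proposition~\ref{bounded} with the higher-dimensional generalization of Silverman's lemma cited at the start of this section. Fix a rational prime $\ell$ and an integer $n\ge 1$, and set $M := A(K(d))[\ell^n]$. Since $K(d)/K$ is Galois---the family of subfields $K'\subset \bar K$ with $[K':K]\le d$ is stable under $G_K$---the group $M$ is $G_K$-stable, and its field of definition $L := K(M)\subset K(d)$ is a finite Galois extension of $K$. Let $H := \Gal(L/K)$, so that $H$ acts faithfully on $M$.

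The next step is to impose two independent constraints on $H$. First, since $M\subset A[\ell^n]$ and $G_K$ acts on $A[\ell^n]$ through its image $G_{\ell^\infty}$, which is a closed subgroup of $\GL_{2g}(\Z_\ell)$, the surjection $G_K\to H$ factors through a continuous surjection $G_{\ell^\infty}\to H$. Second, any finite Galois subextension $L$ of $K(d)/K$ sits inside a compositum $\tilde L_1\cdots \tilde L_s$, where each $\tilde L_i$ is the Galois closure over $K$ of some degree-$\le d$ extension, so $|\Gal(\tilde L_i/K)|$ divides $d!$; consequently $H$ is a subquotient of the product $\prod_i \Gal(\tilde L_i/K)$ of groups of order at most $d!$. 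Proposition~\ref{bounded} with $h=d!$ and $k=2g$ then yields a bound $|H|\le N$ depending only on $d$ and $g$.

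Hence $[L:K]\le N$, so the higher-dimensional Silverman bound of \cite[Proposition 6]{IL-HJ} gives $|A(L)_{\tor}|\le B$ for some $B = B(A,N)$, and in particular $|M|\le B$ uniformly in $\ell$ and $n$. Taking the union over $n$ shows $|A(K(d))[\ell^\infty]|\le B$ for every $\ell$; for $\ell>B$ this forces $A(K(d))[\ell^\infty] = 0$, since any nontrivial $\ell$-primary group has order at least $\ell$. Summing $A(K(d))[\ell^\infty]$ over the finitely many surviving primes yields that $A(K(d))_{\tor}$ is finite.

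The main obstacle is the uniform-in-$\ell$ bound on $|H|$ in the second paragraph: one has to recognize $H$ simultaneously as a continuous quotient of a closed subgroup of $\GL_{2g}(\Z_\ell)$ and as a subquotient of a product of groups of bounded order, and the verification of the latter property rests on the Galois-closure argument for finite subextensions of $K(d)/K$. Once both are in place, the previous two propositions do the real work.
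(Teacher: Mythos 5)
Your proof is correct and follows essentially the same route as the paper: both identify the Galois group over $K$ of the field generated by the $\ell$-power torsion in $K(d)$ simultaneously as a continuous quotient of $G_{\ell^\infty}\subset \GL_{2g}(\Z_\ell)$ and as a subquotient of a product of groups of order at most $d!$, apply Proposition~\ref{bounded} to get a degree bound uniform in $\ell$, and conclude with the torsion bound of \cite[Proposition 6]{IL-HJ}. The only cosmetic difference is that the paper encodes the bounded-order constraint via the permutation representations $G_K\to\SS_d$ attached to the degree-$\le d$ subfields, whereas you use Galois closures of such subfields, which amounts to the same thing.
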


\begin{proof}
This is due to Moon \cite[Proposition 2]{Moon} (and the following remark) in the number field case.
Given Proposition~\ref{facts}, the argument goes through more generally for finitely generated fields.
\end{proof}

\begin{prop}
\label{Ktor}
Let $K$ be a finitely generated extension of $\Q$ and $A/K$ an abelian variety of dimension $g\ge 1$.  
Then $A(K_{\tor})$ is the direct sum of its torsion subgroup $A(K_{\tor})_{\tor} \cong (\Q/\Z)^{2g}$
and a free abelian group. 
\end{prop}

\begin{proof}
This is due to Bays, Hart, and Pillay \cite[Lemma A.7]{BHP} in the number field case.
For the general case, in order to show that quotient $A(K_{\tor})/A(K_{\tor})_{\tor}$ is free, it suffices to
replace $K$ by $L$ and prove the same statement (since an arbitrary subgroup of a free group is free).
By fact (3), we may therefore assume the $K_{\ell^\infty}$ are linearly disjoint over $K$.
By fact (2), there exists $\gamma\in G_K$ whose image in each $\GL(T_\ell(A))$ for $\ell$ odd
is $2^d$. Therefore, $2^d-1$ kills $H^1(\Gal(K_{\tor}/K),A[\ell^m])$ for all $\ell>2$ and all $m\in \N$.
Also, $3^d-1$ kills $H^1(\Gal(K_{\tor}/K),A[2^m])$ for all $m\in \N$.
It follows that there exists a single positive integer which annihilates $H^1(\Gal(K_{\tor}/K),A[\ell^m])$ in all cases.
The proof now concludes exactly as in \cite{BHP}.
\end{proof}

\begin{lem}
\label{lim}
Let $V$ be a vector space over $\Q$, $\Lambda\subset V$ a subgroup and $V_1\subset V_2\subset V_3\subset \cdots$
a chain of finite-dimensional subspaces with union $V$.  If $\Lambda\cap V_i$ is free abelian for all $i$, then $\Lambda$ is free abelian.
\end{lem}

\begin{proof}
Let $\Lambda_0 = 0$ and
$\Lambda_n = \Lambda\cap V_n$ for $n>0$.  The quotient $\Lambda_{n+1}/\Lambda_n$ is finitely generated because $\Lambda_{n+1}$ is a free abelian subgroup of a finite-dimensional vector space over $\Q$ and therefore finitely generated. Since $\Lambda_{n+1}/\Lambda_n$ is also torsion free
(any element of $\Lambda_{n+1}$ which has a positive integer multiple in $\Lambda_n$ must lie in $V_n$ and therefore in $\Lambda_n$), it must be free.  It therefore lifts to a free subgroup $M_{n+1}$ of $\Lambda_{n+1}$.
It follows that $\Lambda = \bigoplus_{n\ge 0} M_{n+1}$ is free.
\end{proof}


If $W$ is any subspace of $A(\bar K)\otimes\Q$, we define $A(K)_W$ to be the intersection of $W$ with $A(K)\otimes 1$.

\begin{prop}
\label{after extension}
If $K$ is any finitely generated extension of $\Q$, $W = A(K)\otimes\Q$, and $d$ is a positive integer, then $A(K_{\tor}(d))_W$
is finitely generated.
\end{prop}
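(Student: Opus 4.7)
The plan is to show that the subgroup $A(K_{\tor}(d))_W$ of $W$ has bounded denominator relative to the lattice $M := A(K) \otimes 1 \subset W$: $A(K_{\tor}(d))_W \subset (1/n) M$ for some positive integer $n$. Since $M$ is a full lattice in $W$, this is equivalent to finite generation. I would achieve this in two stages: first prove it for $A(K_{\tor})_W$ using the Propositions of \S2, then pass to $A(K_{\tor}(d))_W$ via a Kummer cocycle.

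For the first stage, let $\xi \in A(K_{\tor})_W$, so $\xi = P \otimes 1$ for some $P \in A(K_{\tor})$ with $nP = Q + T$, $Q \in A(K)$, $T \in A(\bar K)_{\tor}$. If $\ell \mid n$ for some prime $\ell > c+1$, then $\ell$-divisibility of $A(\bar K)_{\tor}$ allows writing $T = \ell T'$, so $Q = \ell((n/\ell)P - T')$ is divisible by $\ell$ in $A(K_{\tor})$; by Proposition \ref{non-l}, $Q = \ell Q'$ for some $Q' \in A(K)$, and tensoring with $1$ yields $(n/\ell) \xi = Q' \otimes 1 \in M$, a representation with smaller denominator. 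Iterating removes all prime factors $>c+1$ from the minimal denominator, and the analogous argument combined with Proposition \ref{small l} bounds the $\ell$-adic valuation for each of the finitely many primes $\ell \le c+1$. Thus a single integer $N$ depending only on $A/K$ satisfies $A(K_{\tor})_W \subset (1/N) M$.

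For the second stage, take $\xi = P \otimes 1 \in A(K_{\tor}(d))_W$ with $nP = Q + T$ as above. For $\sigma \in G_{K_{\tor}}$, both $Q$ and $T$ lie in $A(K_{\tor})$ and are therefore fixed, so $\phi_P(\sigma) := \sigma(P) - P \in A[n]$. Since $G_{K_{\tor}}$ acts trivially on $A[n] \subset A(K_{\tor})$, $\phi_P \colon G_{K_{\tor}} \to A[n]$ is a continuous group homomorphism; its kernel $G_{K_{\tor}(P)}$ is therefore normal, which forces $K_{\tor}(P)/K_{\tor}$ to be Galois and $\phi_P$ to embed $\Gal(K_{\tor}(P)/K_{\tor})$ into $A[n]$.

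Since the finitely many coordinates of $P$ each lie in a compositum of degree-$\le d$ extensions of $K_{\tor}$, $K_{\tor}(P)$ is contained in some finite compositum $L_1 \cdots L_r$ with $[L_i : K_{\tor}] \le d$, and passing to Galois closures $\tilde L_i$ exhibits $\Gal(K_{\tor}(P)/K_{\tor})$ as a subquotient of $\prod_i \Gal(\tilde L_i / K_{\tor}) \hookrightarrow \SS_d^r$, whose exponent divides $d!$. Hence $d! \cdot \phi_P \equiv 0$, so $\sigma(d!P) = d!P$ for every $\sigma \in G_{K_{\tor}}$, i.e., $d!P \in A(K_{\tor})$; thus $d! \xi = (d!P) \otimes 1 \in A(K_{\tor})_W \subset (1/N) M$, so $\xi \in (1/Nd!) M$. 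This proves $A(K_{\tor}(d))_W \subset (1/Nd!) M$, completing the proof. The main obstacle is the observation that the Kummer cochain $\phi_P$ is automatically a homomorphism because $A[n]$ is $K_{\tor}$-rational, which forces $K_{\tor}(P)/K_{\tor}$ to be Galois and yields a uniform exponent bound via $\SS_d^r$; without this, one would be stuck bounding $(d!)^r$, where $r$ depends on $P$, and no uniform bound would be available.
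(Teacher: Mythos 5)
Your proof is correct and uses the same overall decomposition as the paper: first control $\Lambda := A(K_{\tor})_W$ using the results of \S2, then show that passing to $K_{\tor}(d)$ introduces only a bounded denominator via Kummer theory over $K_{\tor}$. Two execution details differ, neither substantively. In stage~1, the paper simply cites Theorem~\ref{Ktor} to conclude $\Lambda$ is free of finite rank, whereas you re-derive a (slightly stronger) bounded-denominator statement directly from Propositions~\ref{non-l} and \ref{small l}; this is fine, though it quietly reuses the independence hypothesis on the $K_{\ell^\infty}$ and so needs the same tacit pass-to-a-finite-extension step that the paper performs inside the proof of Theorem~\ref{Ktor}. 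In stage~2, the paper argues prime by prime, showing $H^1(G,A[\ell])=0$ for $\ell>d$ and bounding the image of $\Hom(G,A[\ell^m])$ for $\ell\le d$; you instead observe at once that the Kummer cochain $\sigma\mapsto\sigma(P)-P$ is a genuine homomorphism (because $G_{K_{\tor}}$ acts trivially on $A[n]$), so that $K_{\tor}(P)/K_{\tor}$ is Galois and the exponent of its Galois group — a subquotient of $\SS_d^r$ — divides $d!$, giving $d!\,\Lambda'\subset\Lambda$ uniformly. That is exactly the fact underlying the paper's identification $H^1(G,A[\ell])=\Hom(G,A[\ell])$, just packaged as a single uniform exponent bound rather than split across primes; your version is a bit cleaner, and the observation you flag at the end — that homomorphism (not merely cocycle) status is what keeps the bound independent of the number $r$ of extensions involved — is the right thing to emphasize.
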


\begin{proof}
Let $\Lambda = A(K_{\tor})_W$ and $\Lambda' =A(K_{\tor}(d))_W$.  By Proposition~\ref{Ktor}, $\Lambda$ is a free abelian group of finite rank.
Since $\Lambda\subset \Lambda'\subset W$, it suffices to prove that $\Lambda'/\Lambda$ is annihilated by some positive integer.

Let $\ell > d$ be a prime. We claim that $\Lambda'/\Lambda$ has no element of order divisible by $\ell$. Indeed, suppose $Q$ belongs to $A(K_{\tor}(d))$ but not to $A(K_{\tor})$
and $\ell Q\in A(K_{\tor})$.  
There exists a finite sequence of Galois extensions $K_{\tor}^i$ of $K_{\tor}$ with $\Gal(K_{\tor}^i/K_{\tor})$ contained in the symmetric group $\SS_d$,
such that $Q$ is defined over $K_{\tor}^{[1,N]} := \prod_{i=1}^N K_{\tor}^i$. If $G = \Gal(K_{\tor}^{[1,N]}/K_{\tor})$, then $G$ embeds as a subgroup of $\SS_d^N$.

Consider the cohomology sequence of the short exact sequence of $G$-modules
\begin{equation}
\label{Q over l}
0\to A[\ell]\to A[\ell] + \Z Q \stackrel\ell\to \ell\Z Q\to 0.
\end{equation}
By definition, all torsion points of $A$ are defined over $K_{\tor}$, so $G$ acts trivially on $A[\ell]$,
so $H^1(G,A[\ell]) = \Hom(G,A[\ell]).$  However, $\ell$ is prime to the $(d!)^N$ and therefore to the order of $G$, so this cohomology group vanishes.
By the cohomology sequence of \eqref{Q over l}, $\ell Q\in H^0(G,\ell\Z Q)$ lifts to some element of 
$Q+A[\ell]$ which is $G$-invariant. Since the $G$-invariance of $Q+A[\ell]$ means that it belongs to $A(K_{\tor})$, we have $Q\in A(K_{\tor})$,
contrary to assumption.

Finally, for $\ell\le d$, it suffices to prove that there exists $m$ such that the order of an element of $\Lambda'/\Lambda$ cannot be divisible by $\ell^m$.
We choose $m$ such that $\ell^m$ does not divide $d!$. This implies that no element of $\SS_d^N$ has order divisible by $\ell^m$.
Proceeding as before, choosing $Q\in A(K_{\tor}(d))$ with $\ell^m Q\in A(K_{\tor})$ and $\ell^{m-1}Q \not\in A(K_{\tor})$ and considering the sequence of $G$-modules
$$0\to A[\ell^m]\to A[\ell^m] + \Z Q \stackrel{\ell^m}\to \ell^m\Z Q\to 0,$$
we claim that the image of every homomorphism $G\to A[\ell^m]$ lies in $\ell A[\ell^m]$, and this implies that $\ell^{m-1} Q$ lies in $A(K_{\tor})$, contrary to assumption.

\end{proof}

We can now prove Theorem~\ref{main}.

\begin{proof}
Let $K=K_1\subset K_2\subset K_3\subset \cdots$ be an infinite increasing chain of finite extensions of $K_1$ such that $\bigcup_i K_i = K(d)$. Such a chain exists since $K(d)$ is countable, and the extension of $K$ generated by every finite subset of $K(d)$ is finite.
Let $W_i  = A(K_i)\otimes \Q$.
For every $i$, every extension of $K$ of degree $\le d$ is contained in an extension of $(K_i)_{\tor}$ of degree $\le d$, 
so applying Proposition~\ref{after extension} to $K_i$, we have that
$A(K(d))_{W_i} \subset  A((K_i)_{\tor}(d))_{W_i}$ is finitely generated.  By Lemma~\ref{lim}, $A(K(d))\otimes 1$ is free abelian.

The short exact sequence
$$0\to A(K(d))_{\tor} \to A(K(d))\to A(K(d))\otimes 1\to 0$$
implies
$$A(K(d))\cong A(K(d))\otimes 1 \oplus A(K(d))_{\tor}.$$
By Proposition~\ref{K(d) torsion}, $A(K(d))_\tor$ is finite, so $A(K(d))$ is virtually free abelian.
\end{proof}

\section{The Hales-Jewett construction}

In this section, we recall how the Hales-Jewett Theorem \cite{HJ} can be used to construct points on $A(\tilde L)$ when $G_{\tilde L}$ is finitely generated
and $A(\tilde L)$ contains all points of $A[2]$ and at least one other point $P_0$. The reference for this material is \cite{IL-HJ} (note that Corollary 9 in that paper implicitly assumes the existence of $P_0$).

Let $g(x)$ be a monic cubic polynomial 
over $K$ with non-zero constant term, and $A$ be the elliptic curve over $K$
which contains the affine open curve $y^2 = g(x)$. Let $L/K$ be a finite Galois extension such that $A(L)$ contains full $2$-torsion, i.e.,
a field which contains the roots $e_1,e_2,e_3$ of $g(x)$.
A square root $y_0\in \bar K$ of $g(0) = -e_1e_2e_3$ determines a point $(0,y_0)$
on $A(K(\sqrt{-e_1e_2e_3}))$. Setting
$$u :=-\frac 1{x},\ z := \frac y{x^2},$$ 
we have
\begin{equation}
\label{twist}
\frac{-1}{e_1e_2e_3} z^2 = (u+c_0)(u+c_1)(u+c_2)(u+c_3),
\end{equation}
where
\begin{equation}
\label{c sub i}
c_0:=0,\ c_i := \frac 1{e_i},\ i=1,2,3.
\end{equation}
Thus, every elliptic curve over $K$ is a quadratic twist of a curve containing an affine open of the form \eqref{Jacobi};
if $A(K)$ contains any point which is not $2$-torsion, then the twist can be taken to be trivial, though the individual $c_i$ need
not all be defined over $K$.

Let $P$ be a point of the curve \eqref{Jacobi} defined over some extension field of $K$.
The map
$$\phi\colon (u,v) \mapsto  (u,y_0 v)$$
gives an isomorphism from this curve to the curve \eqref{twist}. Both curves are defined over $K$, but the isomorphism is only
defined over $K(y_0)$, so in general it does not commute with elements of the Galois group. However, it preserves the $u$-coordinate,
i.e., commutes with field automorphisms up to multiplication by $-1$.  Our strategy will be to find points $P$ 
of \eqref{Jacobi} defined over $\tilde L\supset K(y_0)$,
use $\phi$ to obtain $\tilde L$-points $\phi(P)$ of \eqref{twist}, which we can regard as $\tilde L$ points of $A$, and then
take $\Tr_{\tilde L/\tilde K} \phi(P)$ to obtain elements of $A(\tilde K)$. Note that the $u$-coordinates of $P$ and $\phi(P)$ are the
same, so $\Tr_{\tilde L/\tilde K} \phi(P)$ is the sum of points of \eqref{twist} of the form $(u_i,z_i)$, where the $u_i$ are all conjugate over
$\tilde K$.

In order to find a rich supply of $\tilde L$-points on \eqref{Jacobi}, we use Ramsey theory.
If $S$ is a finite set, a \emph{$\cC$-coloring} of a set $X$ will mean a function $X\to \cC$. A subset $Y\subset X$ is \emph{monochromatic} if the function is constant on $Y$.
As $G_{\tilde L}$ is finitely generated,
$${\tilde L}^\times/{\tilde L}^{\times 2}\cong \Hom(G_{\tilde L},\{\pm 1\})$$
is finite. We identify the set $\cC$ of colors with elements of $\tilde L^\times/\tilde L^{\times 2}$.
Multiplication by any element of $\tilde L^\times$ takes any monochromatic subset of $\tilde L^\times$
to another monochromatic subset.

If $A(\tilde L)$ contains $A[2]$ as well as some $P_0\not\in A[2]$,
then $A$ has an affine open set of the form $y^2 = (x-e_1)(x-e_2)(x-e_3)$ with $e_i\in \tilde L$ which contains some $(x_0,y_0)$,
with $y_0\neq 0$.
Translating $x$ by $-x_0$, we may assume $x_0=0$, so $y_0^2 = -e_1 e_2 e_3$.  The functions
$$u =-\frac 1{x},\ v = \frac y{y_0x^2},$$ 
satisfy equation \eqref{Jacobi},
where the $c_i$ are defined as in \eqref{c sub i}.

Any $4$-term sequence in $\tilde L^\times$ whose consecutive differences are $c_1-c_0$, $c_2-c_1$, and $c_3-c_2$ can be expressed
as $u+c_0,u+c_1,u+c_2,u+c_3$ for some value of $u$; if such a sequence is monochromatic, the product of its terms is a perfect square in $\tilde L$,
so the sequence determines a pair of points in $A(\tilde L)$.
More generally, any monochromatic $4$-term sequence in $\tilde L^\times$ whose consecutive differences are in ratio
$c_1-c_0:c_2-c_1:c_3-c_2$ determines a pair of points of $A(\tilde L)$.

Let $Z := \{0,1,2,3\}$. For any finite sequence $b_1,b_2,\ldots,b_N\in \Q$, and any $t_0\in \tilde L$, we define a function $\lambda_{t_0}\colon Z^N\to \tilde L$ by 
\begin{equation}
\label{L-coloring}
\lambda_{t_0}(i_1,\ldots,i_N) = t_0+\sum_{j=1}^N b_j c_{i_j}.
\end{equation}
We assume no non-empty subsequence of $b_1,\ldots,b_N$ sums to $0$. We regard this sequence as fixed, so for all but finitely many values of $t_0$,
the image of $\lambda_{t_0}$ lies in $\tilde L^\times$, and $\lambda_{t_0}$ therefore defines an $\cC$-coloring of $Z^N$.

A \emph{combinatorial line} in $Z^N$
is a subset of the form 
$$C = \{\vec v,\vec v+{\vec w},\vec v+2{\vec w},\vec v+3{\vec w}\}$$
with $\vec v=(v_1,\ldots,v_N)\in Z^N$, ${\vec w}=(w_1,\ldots,w_N)\in \{0,1\}^N$, and $w_i\neq 0$ for some $i$.  (Of course, when $w_i=1$, we must have $v_i=0$ in order for $\{\vec v,\vec v+{\vec w},\vec v+2{\vec w},\vec v+3{\vec w}\}$ to be contained in $Z^N$.)
Restricting $\lambda_{t_0}$ to the combinatorial line $C$, the sequence of values of \eqref{L-coloring} is
\begin{equation}\label{seq}t_0+r_C,\,t_0+r_C+s_Cc_1,\,t_0+r_C+s_Cc_2,\,t_0+r_C+s_Cc_3\end{equation}
where 
$$r_C= \sum_{\{j\mid w_j=0\}} b_jc_{v_j},\ s_C = \sum_{\{j\mid w_j=1\}}b_j\neq 0.$$
The differences between successive terms in the sequence \eqref{seq} are, as desired, proportional
to $c_1-c_0$, $c_2-c_1$, and $c_3-c_2$.
Indeed, if the sequence above is monochromatic, there exist a pair of points $(u,\pm z) = (u,\pm y_0 v)\in A(\tilde L)$, where
$$u = \frac{t_0+r_C}{s_C}.$$

A special case of the Hales-Jewett Theorem asserts

\begin{thm}
\label{HJ4}
For all finite sets $\cC$ there exists $N$ such that for every coloring of $Z^N$ by $\cC$, there exists a monochromatic combinatorial line.
\end{thm}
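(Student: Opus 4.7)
The plan is to invoke the Hales--Jewett theorem \cite{HJ} directly. The statement is precisely that theorem specialized to alphabet size $|Z|=4$: taking $N := HJ(4,n)$, where $HJ(k,n)$ denotes the least integer such that every $n$-coloring of $[k]^N$ admits a monochromatic combinatorial line, the conclusion is immediate. Since \cite{IL-HJ} already relies on this classical result in closely analogous form, the cleanest course is simply to cite \cite{HJ} and move on.

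If one wished to reproduce the proof from scratch, the standard strategy is induction on the alphabet size $k$ (here $k=4$), with the number of colors $n$ as a parameter. The base case is trivial. For the inductive step, one partitions the coordinate set $[N]$ into a long sequence of blocks of carefully chosen sizes and, on each block, uses the induction hypothesis for the alphabet $\{0,1,\ldots,k-2\}$ applied to the coloring induced by filling in the top symbol in the remaining coordinates. Iterating this produces a long chain of variable words whose specializations at the top symbol $k-1$ all coincide at a single \emph{focus} point. A pigeonhole argument on the colors of the specializations at the other symbols then forces two of these variable words to share a color with the focus, and concatenating their wildcard positions yields a genuinely monochromatic combinatorial line in $Z^N$.

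The main obstacle in a self-contained write-up is the bookkeeping required to execute this focusing step: one must nest the induction hypothesis through several layers of coordinate blocks while simultaneously tracking the colors assigned at the focus, and the resulting bounds on $N$ are notoriously large (tower-type in the original argument, primitive recursive in Shelah's refinement). Since only the finiteness of $N$ is used in \S5, quantitative bounds are irrelevant here and reproducing the induction would add no value; quoting \cite{HJ} is the right choice.
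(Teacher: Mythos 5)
Your proposal is correct and matches the paper exactly: the paper offers no proof of Theorem~\ref{HJ4}, presenting it simply as a special case (alphabet $Z=\{0,1,2,3\}$, i.e.\ $k=4$) of the Hales--Jewett theorem and citing \cite{HJ}, which is precisely what you do. Your sketch of the focusing argument is accurate but, as you note, unnecessary since only the finiteness of $N$ is used in \S5.
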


Defining $h$ to be the number $5^N-4^N$ of combinatorial lines, this theorem now implies:

\begin{thm}
\label{linear expressions}
There exists a collection of $h$ linear functions with coefficients in $\Q(c_1,c_2,c_3)\subset \tilde L$ such that for all but finitely many $t_0\in \tilde L$, at least one of these
functions, evaluated at $t_0$, gives the $u$-coordinate of a point on \eqref{Jacobi} defined over $\tilde L$
and therefore, since $y_0\in \tilde L$, a point of $A(\tilde L$.
\end{thm}

\section{The Chebotarev density theorem}

In this section, we use the Chebotarev density theorem for schemes over $\Z$ to complete the proof of Theorem~\ref{std}.

\begin{lem}
\label{multiquadratic}
Let $F(u)\in \F_p[u]$ be a monic polynomial of even degree $2d$ which is not the square of a polynomial in $\F_p[u]$.
For $i=1,2,\ldots,h$, let $\ell_i(t)=\alpha_i t + \beta_i$ be a non-constant linear function.  There exists $\epsilon>0$, depending only on the degree of $F(u)$ and on $h$
such that the number of $\xi\in \F_p$ such that $F(\ell_c(\xi))\in {\F_p^\times}^2$ for all $c$ is greater than $\epsilon p$ if $p$ is sufficiently large.
\end{lem}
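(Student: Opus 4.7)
The plan is a standard character-sum argument via Weil's bound. I let $\chi$ denote the Legendre symbol modulo $p$ (with $\chi(0)=0$) and set $g_i(u) := \bar f(\bar\alp_i u+\bar\bet_i)\in\F_p[u]$ and $g_S := \prod_{i\in S}g_i$. Using that $(1+\chi(x))/2$ equals $1$ on $(\F_p^\times)^2$, $0$ on non-squares, and $\tfrac12$ at $x=0$, and that the $\bar u_0$ for which some $g_i(\bar u_0)=0$ number at most $n\deg\bar f$, I expand
\[
N \;=\; 2^{-n}\sum_{S\subseteq\{1,\dots,n\}}\sum_{\bar u_0\in\F_p}\chi(g_S(\bar u_0)) \;+\; O_{n,\deg\bar f}(1).
\]
The $S=\emptyset$ term yields the main contribution $2^{-n}p$. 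For each nonempty $S$ such that $g_S$ is not a constant times a square in $\bar\F_p[u]$, Weil's bound for multiplicative character sums gives $\bigl|\sum_{\bar u_0}\chi(g_S(\bar u_0))\bigr|\le n(\deg\bar f)\sqrt p$.

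The delicate remaining terms come from the nonempty $S$ with $g_S = c_Sh_S^2$ a constant times a square, since each contributes $\chi(c_S)(p-O(1))$ and a minus sign could cancel the main term. The key structural observation is that the set $\mathcal S$ of such $S$ forms an $\F_2$-subspace of $2^{\{1,\dots,n\}}$ — because $g_Sg_T\equiv g_{S\triangle T}$ modulo squares in $\F_p[u]$ — and $S\mapsto\chi(c_S)$ is a group character $\mathcal S\to\{\pm 1\}$. Since $\bar f$ itself is not a square in $\bar\F_p[u]$, no singleton $\{i\}$ lies in $\mathcal S$, hence $|\mathcal S|\le 2^{n-1}$.

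The hard step will be showing that this character $\chi\circ c$ on $\mathcal S$ is trivial: granted this, $\sum_{S\in\mathcal S}\chi(c_S) = |\mathcal S|\ge 1$ and combining everything yields
\[
N \;\ge\; 2^{-n}|\mathcal S|\,p \;-\; (2^n-|\mathcal S|)\,n(\deg\bar f)\sqrt p \;-\; O(1) \;\ge\; 2^{-n-1}\,p
\]
for $p$ sufficiently large, giving the lemma with $\varepsilon:=2^{-n-1}$. I plan to establish triviality by exploiting the structure of $\bar f$ in the application: $\bar f$ is the reduction of a polynomial $f = \prod_{j=0}^3(u+c_j)$ with $c_j\in L_0$, so for $p$ sufficiently large $\bar f$ splits into linear factors over $\F_p$, each $g_S$ does too, and the square root of a split $g_S$ with all multiplicities even lies automatically in $\F_p[u]$, forcing $c_S\in(\F_p^\times)^2$ and $\chi(c_S)=1$.
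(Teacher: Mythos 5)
Your proposal is correct in outline but takes a genuinely different route from the paper. The paper works with the single multiquadratic cover $X\to\P^1$ whose function field is obtained from $\F_p(u)$ by adjoining $\sqrt{\bar f(\bar\alp_i u+\bar\bet_i)}$ for all $i$: it bounds the genus by Riemann--Hurwitz, counts $X(\F_p)$ by the Riemann hypothesis for curves, and uses the splitting criterion (an unramified $\bar u_0$ satisfies the condition iff it has $2^d$ rational preimages) to get at least roughly $2^{-d}p\ge 2^{-n}p$ good values. Your expansion of the indicator into $2^n$ quadratic character sums plus Weil's character-sum bound is the character-theoretic shadow of the same input, but it has a real advantage: it makes visible the degenerate subsets $S$ with $g_S=c_Sh_S^2$, which in the paper's geometric language are exactly the obstruction to $X$ being geometrically irreducible (constant field $\F_{p^2}$), a point the paper's proof passes over in silence when it asserts $\#X(\F_p)=(1+o(1))p$. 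Your worry is well founded: without the paper's standing convention that $f$ is monic of degree $4$, the statement as literally written is false --- take $\bar f(u)=u$, $g_1(u)=u$, $g_2(u)=cu$ with $c$ a nonsquare; then no $\bar u_0$ works, even though $\bar f$ is not a square in $\bar\F_p[u]$. So the extra structural input you invoke at the end is not optional, and your version of the argument is, on this point, more careful than the paper's.

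One step, however, is misjustified. The claim that ``the square root of a split $g_S$ with all multiplicities even lies automatically in $\F_p[u]$'' is false in general: $cu^2$ with $c$ a nonsquare splits with even multiplicities yet is not a square in $\F_p[u]$. Splitting over $\F_p$ is beside the point. The correct (and shorter) argument: normalize $h_S$ to be monic; then $c_S$ is the leading coefficient of $g_S$, which equals $\prod_{i\in S}\bar\alp_i^{\deg\bar f}=\prod_{i\in S}\bar\alp_i^{4}$ because $\bar f$ is monic of degree $4$, and this is a square in $\F_p^\times$, so $\chi(c_S)=1$ for every $S\in\mathcal S$. (Equivalently, what matters is only that $\bar f$ is monic of even degree; no reduction-of-roots or splitting argument is needed, and the same observation is what restores the paper's implicit geometric irreducibility.) With that one-line repair your proof is complete and yields $\epsilon=2^{-n-1}$.
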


\begin{proof}
We choose any $\epsilon < 2^{-h}$.
The genus of the projective non-singular curve $X$ with function field 
$$\F_p(u,\sqrt{F(\ell_1(u))},\ldots,\sqrt{F(\ell_h(u)})),$$
can be bounded, using the Riemann-Hurwitz theorem, in terms of $h$ and $\deg(F)$.  

We claim that $\F_p$ is algebraically closed in the function field of $X$. Indeed, the degree of this field over $\F_p(u)$ is a power of $2$, so the algebraic closure $\F_q$ of $\F_p$ in it must satisfy $[\F_q:\F_p] \in \{1,2,4,\ldots\}$. If this degree is greater than $1$, then $\F_q$ contains $\F_{p^2}$. We may assume $p>2$, so by Kummer theory, it suffices to prove that no $a\in \F_p^\times \setminus (\F_p^\times)^2$ is equivalent modulo $(\F_p(u)^\times)^2$ to an element in the multiplicative group generated by 
$$\{\alpha_i^{-2d}u^{-2d}F(\ell_i(u))\mid 1\le i\le h\}.$$ However, this group is contained in $(1+u^{-1}\F_p[[u^{-1}]])^\times$, so it contains no constant in $\F_p^\times$ which is a non-square.

By the Riemann hypothesis for geometrically irreducible curves over finite fields, the number of $\F_p$-points on $X$ is $(1+o(1))p$.
Now $X$ admits a cover of the projective line of degree $2^r \le 2^h$ such that the image $\xi$ of any point of $X(\F_p)$
in $\P^1(\F_p)$ is either $\infty$, a ramification point of the cover, or an element $\xi\in \F_p$ 
satisfying the desired quadratic residue conditions. 
The lemma follows.
\end{proof}

Now we consider all elliptic curves $E/\F_p$ where $E$ has an open affine curve $U$ of the form $v^2 = f(u)$, 
for a monic polynomial $f$ of degree $4$. We assume, for some positive integer $m$, that all the $m$-torsion points of $E$ are defined over $\F_p$.
We fix a rectangular array $\ell_{j,i}$ of non-constant linear functions in $\F_p[t]$, where $1\le j\le k$ and $1\le i\le h$.

\begin{prop}
\label{m and p}
Suppose $f$, $k$, $h$, and $\ell_{j,i}$ are as above, $m$ and $p$ are sufficiently large in terms of $k$ and $h$, and $p\nmid m$.
Then there exist $(\xi_1,\ldots,\xi_k)\in \F_p^k$ such that for every positive integer $i\le h$ and every $j\le k$, 
$f(\ell_{j,i}(\xi_j))\in (\F_p^\times)^2$ and for each $i$ and collection of points 
$$(P_1,\ldots,P_k) = ((\ell_{1,i}(\xi_1),v_1),\ldots,(\ell_{k,i}(\xi_k),v_k))\in U(\F_p),$$
every non-empty subsum of $P_1,\ldots,P_k$ lies in the complement of $m E(\F_p)$.
\end{prop}

\begin{proof}
By Lemma~\ref{multiquadratic}, the number of $k$-tuples $(\xi_1,\ldots, \xi_k)\in \F_p^k$ such that for each $j\le k$ and $i\le h$, we have
$f(\ell_{j,i}(\xi_j))\in (\F_p^\times)^2$, is at least $(\epsilon p)^k$,
where $\epsilon>0$ depends only on $h$. The number of elements $P$ in $m E(\F_p)$ is $m^{-2} |E(\F_p)| \le 2m^{-2} p$,
and the number of non-empty subsets $S \subset \{1,\ldots,k\}$ is $2^k-1$. Fixing both, the number of ordered $k$-tuples
$(P_1,\ldots,P_k)\in E(\F_p)^k$ such that $\sum_{s\in S} P_s = P$ is $|E(\F_p)|^{k-1} \le 2^{k-1} p^{k-1}$. We say a $k$-tuple $(u_1,\ldots,u_k)\in \F_p^k$ is \emph{bad} if it is the $k$-tuple of $u$-coordinates of such a  $k$-tuple $(P_1,\ldots,P_k)$. The number of bad $k$-tuples is 
less than $2m^{-2} 2^k 2^{k-1} p^k$. For fixed $i$, the number of $(\xi_1,\ldots,\xi_k)\in \F_p^k$ such that $(\ell_{1,i}(\xi_1),\ldots,\ell_{k,i}(\xi_k))$
is bad is likewise less than $m^{-2} 2^{2k} p^k$, and the number of $(\xi_1,\ldots,\xi_k)\in \F_p^k$ for which this condition holds for some $i$
is less than $h m^{-2} 2^{2k} p^k$. If $m > 2^k \sqrt h \epsilon^{-k/2}$, then this is strictly less than $(\epsilon p)^k$.
\end{proof}

Finally, we prove Theorem~\ref{std}. 

\begin{proof}
Given $A/{\tilde K}$, we can find a finitely generated extension $K$ of $\Q$ and an elliptic curve $A/K$ such that $A\times_{\Spec K} \Spec {\tilde K} \cong A$. Choose a finite Galois extension $L/K$ such that all the $2$-torsion of $A$ is rational over $L$ and there is at least one element of $A(L)$ which is not $2$-torsion. Let ${\tilde L} = {\tilde K}L$.  Its absolute Galois group, $G_{\tilde L}$, is of finite index in $G_{\tilde K}$ and therefore topologically finitely generated. 
There is an affine open subvariety of $A$ given by the equation \eqref{twist}.

Applying Theorem~\ref{linear expressions}, we obtain, for some $h$, a sequence of 
$h$ non-constant linear functions $\alpha_i t + \beta_i$ in $L[t]$ such that for all but finitely many $t_0\in L$,
one of these linear expressions, evaluated at $t_0$, gives the $u$-coordinate of a point $(u_i,v_i)$ on \eqref{Jacobi}, defined over some quadratic extension of $L$ which is
contained in $\tilde L$.  
We can express $\Tr_{{\tilde L}/{\tilde K}} \phi(u_i,v_i)\in A({\tilde K})$ as a sum 
\begin{equation}
\label{Trace}
\sum_{\sigma\in \Gal({\tilde L}/{\tilde K})} \sigma(u_i,y_0v_i).
\end{equation}
Note that there is a natural injective homomorphism $\Gal(\tilde L/\tilde K)\to \Gal(L/K)$, so we can regard the sum
\eqref{Trace} as being taken over a finite subgroup of $\Gal(L/K)$.

Our goal is to prove that the resulting set of points generates an infinite-dimensional subspace of $A({\tilde K})\otimes\Q$.
We know that all such points lie in $A(L(2))$, so by Theorem~\ref{main}, they generate a subgroup of a virtually free abelian group,
which is then also, necessarily, virtually free.  If the group they generate spans a finite-dimensional subspace of $A({\tilde K})\otimes\Q$,
then this group is finitely generated. 

Suppose, indeed, that
$Q_1,\ldots,Q_r$ denotes a finite sequence of elements of type \eqref{Trace} which generates the group of all such elements.
Let $k=[L:K]$.
We define $m$ as in Proposition~\ref{m and p}. 
We choose a finite Galois extension $M/K$ which contains $L$ and such that all points on $A(\bar K)$ of order $2m$
and all points $Q'\in A(\bar K)$ such that $mQ'\in \{Q_1,\ldots,Q_r\}$ are defined over $M$.

In order to show that a new element of type \eqref{Trace} is not in $\langle Q_1,\ldots,Q_r\rangle$, we seek a prime $p$
such that the mod $p$ reduction of this new element is not divisible by $m$ in $A(\F_p)$ but the mod $p$ reductions of
all $Q_i$ are $m$-divisible. To make sense of this, we need to spread $A$ out to an elliptic group scheme $\cA$ over a ring $R$
such that the generic fiber of $\cA$ is $A$.

Let $R_0\subset K$ denote the $\Z$-algebra generated by the elementary symmetric polynomials in the $c_i$.
We choose a finitely generated normal domain $R$ over $R_0$ with fraction field $K$, an abelian scheme $\cA$ over $\Spec R$,
and an affine open subscheme $\cU/\Spec R$ such that
$$\cU \cong  \Spec R[u,z]/(z^2 - F_A(u)),$$
where
$$F_A(u) := y_0^2u(u+c_1)(u+c_2)(u+c_3).$$
Let $S$ and $T$ denote the integral closure of $R$ in $L$ and $M$ respectively.
Thus, $R = S^{\Gal(L/K)} = T^{\Gal(M/K)}.$
Replacing $R$, $S$, and $T$ by
$R[a^{-1}]$, $S[a^{-1}]$, and $T[a^{-1}]$ respectively, for a suitable non-zero $a\in R$, we may assume $R$, $S$, and $T$ 
have a series of additional properties.

To begin with, we would like that after reduction modulo any maximal ideal
of odd residue characteristic, $\cU$ will be non-singular. We can accomplish this by
inverting the discriminant of $F_A(u)$.

As $\Z$ is universally Japanese \cite[0335]{Stacks}, it follows that $S$ and $T$
are finitely generated $R$-modules, so by generic freeness \cite[051S]{Stacks}, we may invert some element of $R$ to guarantee
that $S$ and $T$ are free $R$-modules of rank $k=[L:K]$ and $[M:K]$ respectively.

By definition, the set of points at which $\pi\colon\Spec S\to \Spec R$ fails to be smooth forms a closed set. As $S$ is a finitely generated $R$-algebra,
by Chevalley's constructibility theorem \cite[054J]{Stacks}, the image of the non-smooth locus by $\pi$ is constructible.
Since $L/K$ is separable, the generic point of $\Spec R$ is not in this image, and it follows that the image is contained in a proper closed
subset of $\Spec R$. Therefore, by inverting a suitable element of $R$, we may assume $S$ is a smooth $R$-algebra, which implies 
that it is finite \'etale, 
since it is module-finite.
By \cite[03SF]{Stacks}, $\Gal(L/K)$ acts simply transitively on each geometric fiber of $\Spec S\to \Spec R$.
The same argument applies for $\Spec T\to \Spec R$, and we may assume that this morphism is finite \'etale and 
each geometric fiber of $\Spec T\to \Spec R$ admits a simply transitive $\Gal(M/K)$-action.

We may assume that $y_0\in S$ and that all $Q_i$, all $m$-torsion points of $A$, and all $Q'$ such that $mQ'\in \{Q_1,\ldots,Q_r\}$ 
extend to sections over $T$.

We may assume that all $\alpha_i$ and $\beta_i$ lie in $S$ and that the $\alpha_i$ are invertible in $S$.

Suppose $\m$ is a maximal ideal of $R$ with residue field $\F_p$.  Then $T/\m T$ is an $\F_p$-algebra
of dimension $[M:K]$ and is a product of fields.
If at least one of these fields is $\F_p$, then we have a ring isomorphism
$T/\m T \cong \F_p^{[M:K]}$, which implies also the ring isomorphism $S/\m S\cong \F_p^k$.
In this  case, 
we denote by $\m_1,\ldots,\m_k$ the maximal ideals of $S$ lying over $\m$.  
Let $\rho_j\colon S\to \F_p$ denote reduction (mod $\m_j$). The Galois group $\Gal(L/K)$ acts simply transitively on the $\m_j$,
and we define $j(\sigma)$ so that $\m_{j(\sigma)} = \sigma^{-1}(\m_1)$.

Since $A$ is defined over $R$, $\rho_j(A)$ does not depend on $j$, and we call this common elliptic curve $E/\F_p$.
Likewise, $f(u) = \rho_j (u(u+c_1)(u+c_2)(u+c_3))$ does not depend on $j$; the affine curve $U\colon v^2 = \rho_j(y_0)^2f(u)$ over $\F_p$ 
is an affine open subvariety of
$E$. Note that this is isomorphic to the curve $v^2 = f(u)$ since $\rho_j(y_0^2) = \rho_j(y_0)^2 \in (\F_p^\times)^2$.

We define the non-constant affine linear functions $\ell_{j,i}(t) = \rho_j(\alpha_i t+\beta_i)$.
Then all points in $E(\bar{\F}_p)$ annihilated by $2m$
are defined over $\F_p$, and the reduction of each point $Q_i$ to $E(\F_p)$ lies in $m E(\F_p)$.

By Proposition~\ref{m and p}, there exists $(\xi_1,\ldots,\xi_k)\in \F_p^k$ such that for every positive integer $i\le h$ and collection of points 
$$(P_1,\ldots,P_k) = ((\ell_{1,i}(\xi_1),v_1),\ldots,(\ell_{k,i}(\xi_k),v_k))\in U(\F_p),$$
every non-empty subsum of $P_1,\ldots,P_k$ lies in the complement of $m E(\F_p)$.

We regard $H=\Gal(\tilde L/\tilde K)$ as a subgroup of $\Gal(L/K)$.

By the Chinese Remainder Theorem,
$$S/\m S\cong \prod_{j=1}^k S/\m_j \cong \F_p^k,$$
so there exist infinitely many elements of $S$ whose (mod $\m_j$) reduction is $\xi_j$ for all $j$.
Applying Theorem~\ref{linear expressions}, we obtain $t_0\in S$ such that for some $i$, $u_i = \alpha_i t_0 + \beta_i$
gives the $u$-coordinate of a point $(u_i,z_i)$ of  $A(\tilde L)$.  
Without loss of generality, we may assume $i=1$.
The $\Gal({\tilde L}/{\tilde K})$-orbit of $(u_1,z_1)$ consists of elements of the form
$(\sigma(u_1),z_\sigma)$, where $\sigma$ ranges over $\Gal({\tilde L}/{\tilde K})$.

These points are all defined over the extension $S'$ of $S$ generated by all $z_\sigma$.
Reducing a prime of $S'$ lying over (mod $\m_1$), we obtain a point of $E(\F_{p^2})$ whose $u$-coordinate is
$$\rho_1(\sigma(u_1)) = \rho_{j(\sigma)}(u_1) = \ell_{j(\sigma),1}(\xi_{j(\sigma)}),$$
so by Proposition~\ref{m and p}, this is a point on $E(\F_p)$, and, moreover, their 
sum does not lie in $m E(\F_p)$.
It follows that $\Tr_{{\tilde L}/{\tilde K}}(u_1,z_1)$ does not lie in the group generated by the $Q_i$.

It therefore suffices to show that there exist arbitrarily large primes $p$ for which $\Spec T$ has a point whose residue field is $\F_p$.
Any closed point $\Spec F$ on the generic fiber of $\Spec T$ extends to an $O_F[N^{-1}]$-point, where $O_F$ is the ring of integers of the number field $F$, and $N\in \Z$ is a positive integer. Thus $\Spec T$ contains an $\F_p$-point whenever $p$ is a sufficiently large prime which splits 
completely in $F$.
This happens for a positive density set of rational primes by the Chebotarev density theorem, and the theorem follows. 
\end{proof}

\end{document}